\numberwithin{equation}{section}
\numberwithin{figure}{section}
\theoremstyle{plain}
\newtheorem{thm}{\protect\theoremname}
  \theoremstyle{definition}
  \newtheorem{defn}[thm]{\protect\definitionname}
  \theoremstyle{plain}
  \newtheorem{lem}[thm]{\protect\lemmaname}
  \theoremstyle{definition}
  \newtheorem{example}[thm]{\protect\examplename}
  \theoremstyle{plain}
  \newtheorem{prop}[thm]{\protect\propositionname}
  \theoremstyle{definition}
  \newtheorem{problem}[thm]{\protect\problemname}
  \theoremstyle{remark}
  \newtheorem{rem}[thm]{\protect\remarkname}
\DeclareMathOperator{\sgn}{sgn}
\DeclareMathOperator{\slicerank}{slice-rank}
\DeclareMathOperator{\prank}{partition-rank}
\DeclareMathOperator{\trank}{tensor-rank}
\DeclareMathOperator{\dashrank}{-rank}
\DeclareMathOperator{\spann}{span}
\DeclareMathOperator{\Poly}{Poly}
  \providecommand{\definitionname}{Definition}
  \providecommand{\examplename}{Example}
  \providecommand{\lemmaname}{Lemma}
  \providecommand{\problemname}{Problem}
  \providecommand{\propositionname}{Proposition}
  \providecommand{\remarkname}{Remark}
\providecommand{\theoremname}{Theorem}
\providecommand{\MR}{\relax\ifhmode\unskip\space\fi MR }
\newcommand\arXiv[1]{arXiv:\href{http://arXiv.org/abs/#1}{#1}}
\begin{document}

\title{The Partition Rank of a Tensor and $k$-Right Corners in $\mathbb{F}_{q}^{n}$}

\author{Eric Naslund}

\date{\today}
\begin{abstract}
Following the breakthrough of Croot, Lev, and Pach \cite{CrootLevPachZ4},
Tao \cite{TaosBlogCapsets} introduced a symmetrized version of their
argument, which is now known as the slice rank method. In this paper,
we introduce a more general version of the slice rank of a tensor,
which we call the \emph{Partition Rank.} This allows us to extend
the slice rank method to problems that require the variables to be
distinct. Using the partition rank, we generalize a recent result
of Ge and Shangguan \cite{GeShangguan2017NoRightAngles}, and prove
that any set $A\subset\mathbb{F}_{q}^{n}$ of size 
\[
|A|>\binom{n+(k-1)q}{(k-1)(q-1)}
\]
contains a \emph{$k$-right-corner}, that is distinct vectors $x_{1},\dots,x_{k},x_{k+1}$
where $x_{1}-x_{k+1},\dots,x_{k}-x_{k+1}$ are mutually orthogonal,
for $q=p^{r}$, a prime power with $p>k$.
\end{abstract}

\maketitle

\section{Introduction}

In the spring of 2016, Croot, Lev and Pach \cite{CrootLevPachZ4}
proved a breakthrough result on progression-free subsets of $\left(\mathbb{Z}/4\mathbb{Z}\right)^{n}$.
They proved that if $A\subset\left(\mathbb{Z}/4\mathbb{Z}\right)^{n}$
contains no non-trivial three term arithmetic progression, then 
\[
|A|\leq3.60172^{n}.
\]
Ellenberg and Gijswijt \cite{EllenbergGijswijtCapsets} used Croot,
Lev, and Pach's method to prove that any progression-free subset $A\subset\left(\mathbb{Z}/p\mathbb{Z}\right)^{n}$,
where $p$ is a prime, satisfies 
\[
|A|\leq\left(J(p)p\right)^{n},
\]
where 
\[
J(p)=\frac{1}{p}\min_{0<t<1}\frac{1-t^{p}}{(1-t)t^{\frac{p-1}{3}}}
\]
is an explicit constant less than $1$. This was extended to $\left(\mathbb{Z}/k\mathbb{Z}\right)^{n}$
for any integer $k$ in \cite{BlasiakChurchCohnGrochowNaslundSawinUmans2016MatrixMultiplication}.
Following the work of Ellenberg and Gijswijt, Tao, in his blog \cite{TaosBlogCapsets},
symmetrized Croot, Lev, and Pach's argument, introducing the notion
of the \emph{slice rank }of a tensor\emph{. }The slice rank method
has seen numerous applications to a variety of problems, such as the
sunflower problem \cite{NaslundSawinSunflower}, right angles in $\mathbb{F}_{p}^{n}$
\cite{GeShangguan2017NoRightAngles}, and approaches to fast matrix
multiplication \cite{BlasiakChurchCohnGrochowNaslundSawinUmans2016MatrixMultiplication},
and we refer the reader to \cite[Section 4]{BlasiakChurchCohnGrochowNaslundSawinUmans2016MatrixMultiplication}
for an in depth discussion of the slice rank as well as its connection
to geometric invariant theory. 

In this paper, we introduce the \emph{partition rank} of a tensor,
which generalizes the slice rank, and allows us to handle problems
that require the variables to be distinct. Using this new notion of
rank, we generalize the work of Ge and Shangguan \cite{GeShangguan2017NoRightAngles}
to $k$-right corners in $\mathbb{F}_{q}^{n}$. In a different direction,
in \cite{Naslund2017EGZ}, we use the partition rank to provide an
exponential improvement to bounds for the Erd\"{o}s-Ginzburg-Ziv
constant of $\left(\mathbb{Z}/p\mathbb{Z}\right)^{n}$, which are
the first non-trivial upper bounds for high rank abelian groups.

Let $q$ be an odd prime power. A \emph{right angle} in $\mathbb{F}_{q}^{n}$
is a triple $x,y,z\in\mathbb{F}_{q}^{n}$ of distinct elements satisfying
\[
\langle x-y,x-z\rangle=0.
\]
Bennett \cite{Bennett2015RightAngles} proved that any subset $A\subset\mathbb{F}_{q}^{n}$
of size 
\[
|A|\gg q^{\frac{n+2}{3}}
\]
contains a right angle. Ge and Shangguan \cite{GeShangguan2017NoRightAngles}
used the slice rank method to improve this result for very large $n$,
showing that any set $A\subset\mathbb{F}_{q}^{n}$ of size 
\[
|A|>\binom{n+q}{q-1}+3
\]
contains a right angle. In section \ref{sec:Right-Angles}, we improve
Ge and Shangguan's bound, and prove the following theorem:
\begin{thm}
\label{thm:right-angle}Let $q$ be an odd prime power. If $A\subset\mathbb{F}_{q}^{n}$
satisfies
\begin{equation}
|A|>\binom{n+q}{q-1}+2-\binom{n+q-2}{q-3}\label{eq:right_angle_lower_bound}
\end{equation}
then $|A|$ contains a right angle.
\end{thm}
Alternatively, we may write the right hand side of (\ref{eq:right_angle_lower_bound})
as
\[
\binom{n+q-1}{q-1}+\binom{n+q-2}{q-2}+2.
\]

Generalizing the notion of a right angle in $\mathbb{F}_{q}^{n}$,
we say that the vectors $x_{1},\dots,x_{k},x_{k+1}$ form a $k$\emph{-right
corner} if they are distinct, and if the $k$ vectors $x_{1}-x_{k+1},\dots,x_{k}-x_{k+1}$
form a mutually orthogonal $k$-tuple. In other words, $x_{1},\dots,x_{k}$
must meet $x_{k+1}$ at mutually right angles. Our main result is
a bound, polynomial in $n$, for the size of the largest subset of
$\mathbb{F}_{q}^{n}$ that does not contain a $k$-right corner.
\begin{thm}
\label{thm:Right-Corner} Let $k$ be given, and let $q=p^{r}$ with
$p>k$. If $A\subset\mathbb{F}_{q}^{n}$ satisfies
\[
|A|>\binom{n+(k-1)q}{(k-1)(q-1)},
\]
then $A$ contains a $k$-right corner.
\end{thm}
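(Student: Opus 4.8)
The plan is to run the polynomial/slice rank method with the partition rank in place of the slice rank, the new ingredient being an inclusion–exclusion over coincidence patterns that recovers the diagonal tensor even though the relevant polynomial is only ``diagonal up to degeneracies.''

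\emph{Setup.} Assume $A\subseteq\mathbb F_q^n$ contains no $k$-right corner and set
\[
F(x_1,\dots,x_{k+1})=\prod_{1\le i<j\le k}\Bigl(1-\langle x_i-x_{k+1},\,x_j-x_{k+1}\rangle^{\,q-1}\Bigr),
\]
which over $\mathbb F_q$ is $1$ exactly when the $k$ legs $x_1-x_{k+1},\dots,x_k-x_{k+1}$ are pairwise orthogonal and $0$ otherwise. On the diagonal of $A^{k+1}$ all legs vanish, so $F\equiv1$; on a tuple of $k+1$ distinct elements of $A$ one must have $F=0$, since such a tuple with $F=1$ would be a $k$-right corner. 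Thus $T:=F|_{A^{k+1}}$ vanishes on every tuple of pairwise-distinct coordinates and equals $1$ on the diagonal. Since ``$T$ is $1$ on the diagonal and $0$ off distinct tuples'' is by itself far too weak (e.g.\ $\mathbf 1_{x_1=x_2}$ has partition rank $1$), the argument must use the polynomial structure of $F$.

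\emph{Inclusion–exclusion over coincidences.} As $T=T\cdot\mathbf 1_{\{\text{some pair equal}\}}$, expanding the indicator of the union $\bigcup_{i<j}\{x_i=x_j\}$ gives $T=\sum_{\pi}c_\pi\,(T\cdot\mathbf 1_\pi)$, the sum over set partitions $\pi$ of $\{1,\dots,k+1\}$ with at least one block of size $\ge 2$, where $\mathbf 1_\pi$ indicates that coordinates sharing a block agree and $c_\pi=\sum_{S:\pi(S)=\pi}(-1)^{|S|+1}$ are the Möbius-type coefficients. The one-block partition $\hat 1$ has $c_{\hat1}=(-1)^{k+1}k!$, which is invertible in $\mathbb F_p$ precisely because $p>k$, and $T\cdot\mathbf 1_{\hat 1}$ is the diagonal tensor $\Delta_A$. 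Hence
\[
c_{\hat1}\,\Delta_A \;=\; T\;-\!\!\sum_{\pi:\ 2\le\#\pi\le k}\!\! c_\pi\,(T\cdot\mathbf 1_\pi),
\]
and since $\prank(\Delta_A)=|A|$, subadditivity of partition rank gives $|A|\le\prank(T)+\sum_{\pi}\prank(T\cdot\mathbf 1_\pi)$.

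\emph{Bounding the pieces.} For $\pi$ with $\#\pi\ge2$ blocks, $T\cdot\mathbf 1_\pi=\mathbf 1_\pi\cdot F_\pi$ where $F_\pi$ is obtained from $F$ by identifying coordinates within each block — a polynomial in the $\#\pi\ge 2$ block variables. Multiplying a tensor in $\ge2$ variables by such a coincidence indicator does not raise the partition rank: for each rank-one piece $g(x_S)h(x_{S^c})$ one attaches each freshly identified coordinate to whichever side already contains its block representative. (This is exactly the step that fails for $\hat 1$, where $F_{\hat1}$ lives in one variable — which is why that term had to be isolated, and why $p>k$ is needed.) So $\prank(T\cdot\mathbf 1_\pi)\le\prank(F_\pi)$, and likewise $\prank(T)\le\prank(F)$. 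Each $F_\pi$ is built from the forms $\langle x_i-x_{k+1},x_j-x_{k+1}\rangle$, which are linear in each leg variable; consequently the $x_i$-dependence of $F_\pi$ lies in the span of the functions $\langle x_i,x_i\rangle^{a}x_i^{\gamma}$ of bounded degree, and splitting off one such block bounds $\prank(F_\pi)$ by $\binom{n+(k-1)q}{(k-1)(q-1)}$. Accounting for which $\pi$ actually contribute ($k$ of the partitions put all but one leg together with the apex, making $F_\pi$ constant and contributing $O(1)$; the remaining contributions are each dominated by $\binom{n+(k-1)q}{(k-1)(q-1)}$) yields $|A|\le(k+1)\binom{n+(k-1)q}{(k-1)(q-1)}$.

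\emph{Main obstacle.} The delicate points are all on the partition-rank side. First, one needs the partition-rank analogue $\prank(\Delta_A)=|A|$ and, crucially, the ``harmless multiplication'' lemma for coincidence indicators together with the observation that it breaks exactly at $\hat 1$ — this, with $c_{\hat 1}=\pm k!$, is what forces the hypothesis $p>k$. Second, one must carry out the bookkeeping of the merged polynomials $F_\pi$: checking that each merged inner-product polynomial lands in the right bounded-degree function space so that its partition rank is controlled by the stated binomial, and that the number of non-negligible contributions is indeed $k+1$. I expect the uniform control of $\prank(F_\pi)$ across all coincidence patterns, and the development of the basic partition-rank facts ($\prank$ of a diagonal, subadditivity, and the multiplication lemma), to be where essentially all the work lies.
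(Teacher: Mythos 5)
Your proposal is, at its core, the same argument as the paper's, arrived at from a slightly different angle. The paper constructs the distinctness-type indicator as a signed sum over $S_{k+1}\setminus C$ (Lemma~\ref{lem:critical-indicator-function}), where $C$ is the class of $(k+1)$-cycles; expanding $f_\sigma$ as a product of $\delta$'s over the cycle partition of $\sigma$ and collecting by cycle type reproduces exactly your M\"obius coefficients $c_\pi$ (with $c_{\hat 1}=\pm k!$, hence the hypothesis $p>k$). Your ``harmless multiplication'' lemma is the mechanism behind equations (\ref{eq:f_sigma_delta_P})--(\ref{eq:J_k_Decomp}) in the paper, and your use of Lemma~\ref{lem:Critical_Lemma} to conclude $|A|\le\prank(J_k)$ is precisely the paper's step. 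So the decomposition, the use of $p>k$, and the overall structure are the same; you have simply derived the identity by inclusion--exclusion over coincidences rather than by antisymmetrizing over $S_{k+1}$.

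Where the proposal is genuinely incomplete is the quantitative bookkeeping, which is where the paper spends most of its effort. You bound $|A|\le\prank(T)+\sum_{\pi}\prank(T\cdot\mathbf 1_\pi)$ and then assert that ``$k$ of the partitions... contribute $O(1)$; the remaining contributions are each dominated by $\binom{n+(k-1)q}{(k-1)(q-1)}$,'' concluding the bound $(k+1)\binom{n+(k-1)q}{(k-1)(q-1)}$. This accounting does not work as stated: the number of partitions $\pi$ with $2\le\#\pi\le k$ is on the order of the Bell number $B(k+1)$, not $k+1$, so a per-$\pi$ bound of $\binom{n+(k-1)q}{(k-1)(q-1)}$ each would give a much larger total. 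Moreover, a single crude per-$\pi$ bound obtained by ``splitting off one block'' is not uniformly $\le\binom{n+(k-1)q}{(k-1)(q-1)}$: if you split along a block variable $y_\ell$ that carries a self-orthogonality factor $1-\langle y_\ell-y_m,y_\ell-y_m\rangle^{q-1}$, the degree in $y_\ell$ can reach $m(q-1)$, which exceeds $(k-1)(q-1)$; and splitting along the apex's block gives even higher degree. The correct argument (as in the paper's proof of the proposition bounding $\prank(J_k)$) must instead expand each $F_\pi\mathbf 1_\pi$ monomial by monomial, assign each monomial to the bipartition whose ``small'' side has degree at most the average, and then sum $C_{d(S)}$ over all proper non-empty subsets $S\subset\{1,\dots,k+1\}$ with the degree budgets $r_1(t),r_2(t)$ worked out by cycle type. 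One then shows, via the identity $\binom{n+(k-1)q}{(k-1)(q-1)}=\sum_{i=0}^{k-1}\binom{k-1}{i}\binom{n+(k-1)(q-1)}{(k-1)(q-1)-i}$, that $(k+1)$ copies of the target binomial dominate the full sum. This is exactly the slack you alluded to when you flagged the bookkeeping of the merged polynomials as ``where essentially all the work lies,'' and it is the part your proposal does not carry out; the plan is sound, but the claimed justification for the constant $k+1$ is incorrect as written.
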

The proof of theorem \ref{thm:Right-Corner} relies on the flexibility
of the partition rank for $k\geq3$, as the tensors involved will
have polynomially small partition rank yet exponentially large slice
rank.

In subsection \ref{subsec:The-Slice-Rank}, we review the slice rank,
as defined in \cite{TaosBlogCapsets} and \cite[Section 4.1]{BlasiakChurchCohnGrochowNaslundSawinUmans2016MatrixMultiplication},
and in subsection \ref{subsec:The-Partition-Rank} we introduce the
partition rank and prove the critical lemma, which states that the
partition rank of a diagonal tensor is equal to the number of non-zero
diagonal entries. In section \ref{sec:The-Distinctness-Indicator},
we introduce the indicator function $H_{k}(x_{1},\dots,x_{k})$, which
satisfies 
\[
H_{k}(x_{1},\dots,x_{k})=\begin{cases}
1 & \text{if }x_{1},\dots,x_{k}\text{ are distinct},\\
(-1)^{k-1}(k-1)! & \text{if }x_{1}=\cdots=x_{k},\\
0 & \text{otherwise}.
\end{cases}
\]
For $k\geq4$, $H_{k}$ will have low partition rank, but large slice
rank. This function allows us to modify our tensor so that it picks
up only $k$-tuples of distinct elements. In section \ref{sec:Right-Corners},
we use the partition rank and the indicator function $H_{k}$ to prove
theorem \ref{thm:Right-Corner}.

\section{The Slice Rank and the Partition Rank\label{sec:The-Partition-Rank}}

\subsection{\label{subsec:The-Slice-Rank}The Slice Rank}

We begin by recalling the definition of the rank of a two variable
function. Let $X,Y$ be finite sets, and suppose that $\mathbb{F}$
is a field. The rank of 
\[
F\colon X\times Y\rightarrow\mathbb{F}
\]
is defined to be the smallest $r$ such that 
\[
F(x,y)=\sum_{i=1}^{r}f_{i}(x)g_{i}(y)
\]
for some functions $f_{i},g_{i}$. The function $F$ is given by an
$|X|\times|Y|$ matrix with entries in $\mathbb{F}$, and the products
$f_{i}(x)g_{i}(y)$ correspond to the outer products of vectors. For
finite sets $X_{1},\dots,X_{n}$, a function 
\[
h\colon X_{1}\times\cdots\times X_{n}\rightarrow\mathbb{F}
\]
of the form 
\[
h(x_{1},\dots,x_{n})=f_{1}(x_{1})f_{2}(x_{2})\cdots f_{n}(x_{n})
\]
is called a \emph{rank 1 function,} and the tensor rank of is defined
to be the minimal $r$ such that 
\[
F=\sum_{i=1}^{r}g_{i}
\]
where the $g_{i}$ are rank 1 functions. Following the breakthrough
of Croot, Lev, and Pach \cite{CrootLevPachZ4}, Tao in his blog \cite{TaosBlogCapsets}
introduced the notion of the slice rank of a tensor:
\begin{defn}
Let $X_{1},\dots,X_{n}$ be finite sets. We say that the function
\[
h\colon X_{1}\times\cdots\times X_{n}\rightarrow\mathbb{F}
\]
has \emph{slice rank $1$ if}
\[
h(x_{1},\dots,x_{n})=f(x_{i})g\left(x_{1},\dots,x_{i-1},x_{i+1},\dots,x_{n}\right)
\]
for some $1\leq i\leq n$. The \emph{slice rank} of 
\[
F\colon X_{1}\times\cdots\times X_{n}\rightarrow\mathbb{F}
\]
is the smallest $r$ such that 
\[
F=\sum_{i=1}^{r}g_{i}
\]
where the $g_{i}$ have slice rank $1$.
\end{defn}
The following lemma was proven by Tao, and used to great effect:
\begin{lem}
Let $X$ be a finite set, and let $X^{n}$ denote the $n$-fold Cartesian
product of $X$ with itself. Suppose that 
\[
F\colon X^{n}\rightarrow\mathbb{F}
\]
is a diagonal tensor, that is 
\[
F(x_{1},\dots,x_{n})=\sum_{a\in A}c_{a}\delta_{a}(x_{1})\cdots\delta_{a}(x_{n})
\]
for some $A\subset X$, $c_{a}\neq0$, where 
\[
\delta_{a}(x)=\begin{cases}
1 & x=a\\
0 & \text{otherwise}
\end{cases}.
\]
Then 
\[
\slicerank(F)=|A|.
\]
\end{lem}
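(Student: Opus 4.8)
The plan is to prove the two inequalities $\slicerank(F) \le |A|$ and $\slicerank(F) \ge |A|$ separately. The upper bound is immediate from the definition: each summand $c_a\,\delta_a(x_1)\cdots\delta_a(x_n)$ is, for instance, $\bigl(c_a\delta_a(x_1)\bigr)\cdot\bigl(\delta_a(x_2)\cdots\delta_a(x_n)\bigr)$, which has slice rank $1$, so $F$ is a sum of $|A|$ slice-rank-$1$ functions. The content of the lemma is therefore the lower bound $\slicerank(F) \ge |A|$.

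For the lower bound I would argue by contradiction. Suppose $F = \sum_{j=1}^{r} g_j$ with each $g_j$ of slice rank $1$ and $r < |A|$. After grouping the summands according to which coordinate $i$ they ``slice off'', write $g_j(x_1,\dots,x_n) = f_j(x_{i(j)})\,h_j(x_1,\dots,\widehat{x_{i(j)}},\dots,x_n)$. For each fixed coordinate $i$, the collection $\{f_j : i(j)=i\}$ spans a subspace of the space of functions $X\to\mathbb{F}$; restricting attention to the subset $A$, we get a subspace $V_i$ of $\mathbb{F}^A$, and the key point is that $\sum_i \dim V_i \le r < |A|$.

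Now I would restrict $F$ to the ``cube'' $A^n \subset X^n$. On this restriction, $\delta_a(x_\ell)$ for $x_\ell\in A$ is exactly the Kronecker delta, so $F\big|_{A^n}$ is the honest diagonal tensor with entries $c_a$, $a\in A$, on the diagonal of $A^n$. On the other hand, $F\big|_{A^n}$ is still a sum of the restricted slice-rank-$1$ pieces, whose first factors now live in the $V_i$. Since $\sum_i \dim V_i < |A|$, a dimension count produces, for each $i$, a vector $v^{(i)}\in\mathbb{F}^A$ orthogonal (under the standard pairing) to all of $V_i$, such that the $v^{(i)}$ are not all identically zero in a suitable combined sense — more precisely, one chooses a single nonzero functional on $\mathbb{F}^A$ that, contracted appropriately in each slot, kills every $g_j|_{A^n}$. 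Concretely: pick $w\in\mathbb{F}^A$ in the (nonempty, since $\sum\dim V_i<|A|$) complement, and evaluate the multilinear-style contraction $\sum_{a_1,\dots,a_n\in A} w(a_1)\cdots w(a_n)\, F(a_1,\dots,a_n)$. Each $g_j|_{A^n}$ contributes $0$ once $w$ is chosen orthogonal to the relevant $V_{i(j)}$ slice — this is where the careful choice of $w$ relative to all the $V_i$ simultaneously is needed — while the diagonal tensor contributes $\sum_{a\in A} c_a\, w(a)^n$.

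The main obstacle is exactly this last coordination step: a single $w$ must simultaneously annihilate every $V_i$, yet not make $\sum_{a} c_a w(a)^n$ vanish. The standard resolution is a polynomial/genericity argument: the set of $w$ orthogonal to all $V_i$ is a linear subspace of dimension $\ge |A| - \sum_i\dim V_i \ge 1$, and on a nonzero subspace the polynomial $w\mapsto \sum_{a} c_a w(a)^n$ cannot vanish identically — over an infinite field this is clear, and over a finite field one passes to an extension or argues that the diagonal pairing is nondegenerate, e.g. by choosing $w$ supported on a single coordinate $a_0$ with $w(a_0)\neq 0$ when that is consistent with the orthogonality constraints, or more robustly by using that the map is detected on a basis. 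Once $w$ is in hand, the contradiction $0 = \sum_{j} g_j(\dots) = \sum_{a\in A} c_a w(a)^n \neq 0$ finishes the proof, giving $r \ge |A|$ and hence $\slicerank(F) = |A|$.
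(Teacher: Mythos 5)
The upper bound $\slicerank(F)\le|A|$ is fine, and the setup for the lower bound — grouping the slice-rank-$1$ summands by the coordinate they slice off, bounding $\sum_i\dim V_i\le r$, and finding a nonzero $w\in\mathbb{F}^A$ orthogonal to every $V_i$ — is all correct bookkeeping. But the argument collapses at exactly the step you flag. There is no reason $\sum_{a\in A}c_a\,w(a)^n$ should be nonzero for any $w$ in the orthogonal complement of $V_1+\cdots+V_n$, and the proposed ``genericity'' fixes do not repair this. The claim that ``on a nonzero subspace the polynomial $w\mapsto\sum_a c_a w(a)^n$ cannot vanish identically --- over an infinite field this is clear'' is simply false: over $\mathbb{C}$ with $n=2$, $c_1=c_2=1$, the form $w_1^2+w_2^2$ vanishes identically on the line $w_2=iw_1$; over $\mathbb{R}$ with $c_1=1$, $c_2=-1$, the form $w_1^2-w_2^2$ vanishes on $w_1=w_2$. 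A homogeneous degree-$n$ form can perfectly well vanish on a nonzero linear subspace. Passing to a field extension does not help, because the $V_i$ and hence their common annihilator are handed to you by the putative decomposition, not chosen by you; and ``picking $w$ supported on a single $a_0$'' requires $\delta_{a_0}\perp V_i$ for all $i$, which is an extra hypothesis, not a free move. Since the only thing standing between you and the conclusion is a nonvanishing claim that is false in general, this is a genuine gap, not a technicality.

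The proof the paper is pointing to (Tao \cite{TaosBlogCapsets}, Lemma~4.7 of \cite{BlasiakChurchCohnGrochowNaslundSawinUmans2016MatrixMultiplication}; the paper's own Lemma~\ref{lem:Critical_Lemma} adapts the identical technique to the partition rank) avoids the one-shot contraction entirely and instead inducts on the number of coordinates, contracting only \emph{one} coordinate at a time. One forms the annihilator $V$ of the span of the first-coordinate slices, takes $v\in V$ of \emph{maximal support}, and shows that support has size at least $|X|-|J_1|$ (else one could enlarge it by adding an element of $V$ vanishing on the current support). Contracting coordinate $1$ against $v$ kills the $|J_1|$ terms that slice off coordinate $1$, leaves the remaining terms as slice-rank-$1$ functions of $n-1$ variables, and turns the diagonal tensor into a diagonal tensor in $n-1$ variables with at least $|A|-|J_1|$ nonzero entries; the inductive hypothesis then gives $r-|J_1|\ge|A|-|J_1|$, i.e.\ $r\ge|A|$. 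The two devices you are missing are precisely (i) contracting a single coordinate so that the result is still a tensor (hence induction applies) rather than a scalar that might accidentally vanish, and (ii) the maximal-support argument, which guarantees the contraction keeps enough of the diagonal alive. Your full contraction replaces both with a single polynomial-nonvanishing miracle that is not available.
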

\begin{proof}
We refer the reader to \cite[Lemma 1]{TaosBlogCapsets} or \cite[Lemma 4.7]{BlasiakChurchCohnGrochowNaslundSawinUmans2016MatrixMultiplication}.
\end{proof}

\subsection{\label{subsec:The-Partition-Rank}The Partition Rank}

We introduce a new more general definition of the slice rank, that
we call the partition rank. Given variables $x_{1},\dots,x_{n}$ and
a set $S\subset\{1,\dots,n\}$, $S=\left\{ s_{1},\dots,s_{k}\right\} $,
we use the notation $\vec{x}_{S}$ to denote the subset of variables
\[
x_{s_{1}},\dots,x_{s_{k}},
\]
and so for a function $g$ of $k$ variables, we have that 
\[
g(\vec{x}_{S})=g(x_{s_{1}},\dots,x_{s_{k}}).
\]
For example, if $S=\{1,3,4\}$ and $T=\{2,5\}$ then 
\[
g\left(\vec{x}_{S}\right)f\left(\vec{x}_{T}\right)=g(x_{1},x_{3},x_{4})f(x_{2},x_{5}).
\]
A partition of $\{1,2,\dots,n\}$ is a collection $P$ of non-empty,
pairwise disjoint, subsets of $\{1,\dots,n\}$, satisfying 
\[
\bigcup_{A\in P}A=\{1,\dots,n\}.
\]
We say that $P$ is the \emph{trivial} partition if it consists only
of a single set, $\{1,\dots,n\}$. 
\begin{defn}
Let $X_{1},\dots,X_{n}$ be finite sets, and suppose that
\[
h:X_{1}\times\cdots\times X_{n}\rightarrow\mathbb{F}.
\]
If there exists some non-trivial partition $P$ such that 
\[
h(x_{1},\dots,x_{n})=\prod_{A\in P}f_{A}\left(\vec{x}_{A}\right)
\]
for some functions $f_{A}$, then $h$ is said to have \emph{partition
rank $1$.}
\end{defn}
Equivalently, the tensor $h:X_{1}\times\cdots\times X_{n}\rightarrow\mathbb{F}$
has partition rank $1$ if the variables can be split into disjoint
non-empty sets $S_{1},\dots,S_{t}$, with $t\geq2$, such that 
\[
S_{1}\cup\cdots\cup S_{t}=\{1,2,\dots,n\}
\]
and 
\[
h(x_{1},\dots,x_{n})=f_{1}(\vec{x}_{S_{1}})f_{2}(\vec{x}_{S_{2}})\cdots f_{n}(\vec{x}_{S_{t}})
\]
for some functions $f_{1},\dots,f_{t}$. In particular, $h:X_{1}\times\cdots\times X_{n}\rightarrow\mathbb{F}$
will have partition rank $1$ if and only if it can be written as
\[
h(x_{1},\dots,x_{n})=f(\vec{x}_{S})g(\vec{x}_{T})
\]
for some $f,g$ and some disjoint $S,T\neq\emptyset$ with $S\cup T=\{1,\dots,n\}$,
and $h$ will have slice rank $1$ if it can be written in the above
form with either $|S|=1$ or $|T|=1$. In other words, $h$ has partition
rank $1$ if the tensor can be written as a non-trivial outer product,
and it has slice rank $1$ if it can be written as the outer product
between a vector and a $k-1$ dimensional tensor.
\begin{example}
Let $X$ be a finite set. The function $h:X^{7}\rightarrow\mathbb{F}$
given by 
\[
h(x_{1},x_{2},x_{3},x_{4},x_{5},x_{6},x_{7})=f_{1}(x_{2},x_{5},x_{7})f_{2}(x_{1},x_{3})f_{3}(x_{4},x_{6})
\]
will have partition rank $1$, with partition $P$ given by the sets
with $S_{1}=\left\{ 2,5,7\right\} $, $S_{2}=\{1,3\}$ and $S_{3}=\{4,6\}$. 
\end{example}
This leads us to the definition of the partition rank:
\begin{defn}
Let $X_{1},\dots,X_{n}$ be finite sets.\emph{ The} \emph{partition
rank} of $F$ is the minimal $r$ such that 
\[
F=\sum_{i=1}^{r}g_{i}
\]
where the $g_{i}$ have partition rank $1$.
\end{defn}
The partition rank is the minimal rank among all possible ranks obtained
from partitioning or separating the variables. It will be convenient
to  have a notation for the rank corresponding to a specific subset
of partitions $\mathcal{P}$.
\begin{defn}
Let $X_{1},\dots,X_{n}$ be finite sets, and let $\mathcal{P}$ be
a collection of non-trivial partitions of $\{1,\dots,n\}$, and let
\[
h\colon X_{1}\times\cdots\times X_{n}\rightarrow\mathbb{F}.
\]
We say that $h$ has \emph{$\mathcal{P}$-rank $1$} if there exists
a partition $P\in\mathcal{P}$ such that 
\[
h(x_{1},\dots,x_{n})=\prod_{A\in P}f_{A}\left(\vec{x}_{A}\right)
\]
for some functions $f_{A}$. The $\mathcal{P}$\emph{-rank} of a function
\[
F\colon X_{1}\times\cdots\times X_{n}\rightarrow\mathbb{F},
\]
is defined to be the minimal $r$ such that 
\[
F=\sum_{i=1}^{r}g_{i}
\]
where the $g_{i}$ have $\mathcal{P}$-rank $1$.
\end{defn}
The partition rank is given by the $\mathcal{P}$-rank when $\mathcal{P}$
is the set of all non-trivial partitions. Let $\mathcal{P}_{\text{slice}}$
denotes the set of partitions of $\{1,\dots,n\}$ into a set of size
$1$ and a set of size $n-1$. Then the $\mathcal{P}_{\text{slice}}$-rank
will be equal to the slice-rank, and it follows that 
\[
\prank\leq\slicerank.
\]
Letting $\mathcal{P}_{\text{tensor}}$ denote the set containing only
the partition of $\{1,\dots,n\}$ into $n$ sets each of size $1$.
Then the $\mathcal{P}_{\text{tensor}}$-rank will be equal to the
tensor rank. This partition of $\{1,\dots,n\}$ is a refinement of
every partition in $\mathcal{P}_{\text{slice}}$, and so 
\[
\prank\leq\slicerank\leq\trank.
\]
Generalizing this relation between a refinement of a partition and
the $\mathcal{P}$-rank, we have the following proposition:
\begin{prop}
Let $\mathcal{P},\mathcal{P}'$ be two collections of non-trivial
partitions of $\{1,\dots,n\}$. Suppose that every partition $P\in\mathcal{P}$
is refined by some partition $P'\in\mathcal{P}'$. Then we have 
\[
\mathcal{P}\dashrank\leq\mathcal{P}'\dashrank.
\]
\end{prop}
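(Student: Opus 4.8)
The plan is to reduce the proposition to a single combinatorial observation: coarsening the partition that witnesses a rank-one factorization can only weakly decrease the associated rank. I would first isolate this as a \emph{grouping lemma}: if $P'$ is a refinement of $P$ --- that is, every block of $P'$ is contained in some block of $P$ --- and $h$ has $\{P'\}$-rank $1$, then $h$ has $\{P\}$-rank $1$. To see this, write $h(x_1,\dots,x_n)=\prod_{B\in P'}f_B(\vec{x}_B)$. For a fixed block $A\in P$, the blocks $B\in P'$ with $B\subseteq A$ are pairwise disjoint and their union is $A$: each element of $A$ lies in some block of $P'$, and that block is then forced to sit inside $A$. Hence $g_A(\vec{x}_A):=\prod_{B\in P',\,B\subseteq A}f_B(\vec{x}_B)$ depends only on $\vec{x}_A$, and grouping the factors of $h$ according to the blocks of $P$ gives $h=\prod_{A\in P}g_A(\vec{x}_A)$. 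Since $P$ belongs to $\mathcal{P}$ it is non-trivial, so this is a legitimate $\{P\}$-rank-$1$, hence $\mathcal{P}$-rank-$1$, representation.

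Granting the lemma, the proposition is a term-by-term rewriting. Let $r$ be the $\mathcal{P}'$-rank of $F$ and fix a decomposition $F=\sum_{i=1}^r g_i$ in which each $g_i$ has $\mathcal{P}'$-rank $1$, say $g_i(x_1,\dots,x_n)=\prod_{B\in P_i'}f_{i,B}(\vec{x}_B)$ for some partition $P_i'\in\mathcal{P}'$. By the hypothesis, each $P_i'$ is a refinement of some partition $P_i\in\mathcal{P}$, so the grouping lemma (applied with $P=P_i$ and $P'=P_i'$) shows that $g_i$ has $\{P_i\}$-rank $1$, hence $\mathcal{P}$-rank $1$. Therefore $F=\sum_{i=1}^r g_i$ is already a sum of $r$ tensors of $\mathcal{P}$-rank $1$, which shows $\mathcal{P}\dashrank(F)\leq r=\mathcal{P}'\dashrank(F)$. (If $F$ admits no finite $\mathcal{P}'$-decomposition the inequality is vacuous, and $F=0$ is trivial.) As a sanity check, taking $\mathcal{P},\mathcal{P}'$ among $\mathcal{P}_{\text{partition}}$, $\mathcal{P}_{\text{slice}}$, $\mathcal{P}_{\text{tensor}}$ recovers $\prank\leq\slicerank\leq\trank$.

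I do not expect a real obstacle here; the entire content is the grouping lemma, and the two points that deserve care are both minor. First, one must check that the blocks of $P'$ lying inside a given block $A$ of $P$ genuinely exhaust $A$ --- but this uses nothing beyond the facts that $P'$ covers $\{1,\dots,n\}$ and refines $P$. Second, one must notice that the coarsened partition $P$ is automatically non-trivial, since it is a member of $\mathcal{P}$, so no degenerate ``rank-one'' term is introduced by the regrouping. It is also worth being explicit about the direction of refinement: what the argument actually uses is that every partition appearing in a $\mathcal{P}'$-representation of $F$ --- i.e. every partition one might draw from $\mathcal{P}'$ --- is a refinement of some partition in $\mathcal{P}$, and it is in exactly this form that the hypothesis is invoked above.
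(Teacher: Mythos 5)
Your argument is correct and is essentially the paper's own proof: isolate the regrouping step (a rank-one factorization along $P'$ collapses to one along $P$ when $P'$ refines $P$, because the $P'$-blocks contained in a given $P$-block $A$ partition $A$) and apply it to each summand of a minimal $\mathcal{P}'$-decomposition. The two points of care you single out --- that the $P'$-blocks inside $A$ really exhaust $A$, and that $P$ is automatically non-trivial because it lies in $\mathcal{P}$ --- are exactly what the paper's regrouping relies on.

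Your closing remark about the direction of refinement deserves to be promoted from an aside to a correction of the proposition's statement. As written, the hypothesis says ``for every $P\in\mathcal{P}$ there is some $P'\in\mathcal{P}'$ refining it,'' but what your proof (and the paper's) actually uses is the reversed form ``for every $P'\in\mathcal{P}'$ there is some $P\in\mathcal{P}$ that $P'$ refines,'' and the two are not equivalent. For instance, with $n=4$, take $\mathcal{P}=\bigl\{\{\{1\},\{2,3,4\}\}\bigr\}$ and $\mathcal{P}'=\bigl\{\{\{1\},\{2\},\{3\},\{4\}\},\ \{\{1,2\},\{3,4\}\}\bigr\}$: the sole member of $\mathcal{P}$ is refined by the all-singletons partition in $\mathcal{P}'$, so the hypothesis as literally stated holds; yet $F(x_1,x_2,x_3,x_4)=\delta(x_1,x_2)\delta(x_3,x_4)$ has $\mathcal{P}'$-rank $1$ while its $\mathcal{P}$-rank equals $|X|$, since as an $|X|\times|X|^3$ matrix the rows $x_1\mapsto\delta(x_1,x_2)\delta(x_3,x_4)$ have pairwise disjoint supports. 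You were right to invoke the hypothesis in the reversed form; the proposition should be restated that way.
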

In particular, this proposition implies that the partition rank is
equal to the bipartition rank, the $\mathcal{P}$-rank when $\mathcal{P}$
is the set of all bipartitions. 
\begin{proof}
Suppose that $g$ has $\mathcal{P}'$-rank $1$. Then there exists
$P'\in\mathcal{P}'$, and $f_{A}$ such that 
\[
h=\prod_{A\in P'}f_{A}.
\]
Since $P'$ refines some $P\in\mathcal{P}$, we may write
\[
h=\prod_{B\in P}g_{B},
\]
where 
\[
g_{B}=\prod_{\begin{array}{c}
A\in P\\
A\subset B
\end{array}}f_{A}.\qedhere
\]
\end{proof}
When $k=2$, the slice rank, partition rank and tensor rank are identical,
since there is only one non-trivial partition of $2$. When $k=3$,
the slice rank and partition rank are identical, but differ from the
tensor-rank, and when $k\geq4$, all three are different. However,
the partition rank can be substantially lower than the slice rank.
\begin{example}
Consider $k=4$. The only partitions of $\{1,2,3,4\}$ that do not
refine partitions appearing in $\mathcal{P}_{\text{slice }}$ are
given by the additive partition $2+2=4$, that is $\{\{1,2\},\{3,4\}\}$,
$\{\{1,3\},\{2,4\}\}$, $\{\{1,4\},\{2,3\}\}$. For a finite set $X$
and a field $\mathbb{F}$, consider the function 
\[
F\colon X\times X\times X\times X\rightarrow\mathbb{F}
\]
given by
\[
F(x,y,z,w)=\begin{cases}
1 & x=y\text{ and }z=w\\
0 & \text{otherwise}
\end{cases},
\]
that is 
\[
F(x,y,z,w)=\delta(x,y)\delta(z,w)
\]
where $\delta(x,y)$ is the function that is $1$ when $x=y$, and
$0$ otherwise. Then by definition $F$ satisfies $\prank(F)=1$.
Theorem 4.6 of \cite{BlasiakChurchCohnGrochowNaslundSawinUmans2016MatrixMultiplication}
states that if $F$ is a \emph{stable tensor} in the sense of Geometric
Invariant Theory, then $\slicerank(F)=|X|$. To see why $F$ is stable,
let $G=SL_{|X|}\times SL_{|X|}\times SL_{|X|}\times SL_{|X|}$ and
recall that a tensor $F$ is \emph{unstable }if and only if every
$G$-invariant homogeneous polynomial vanishes on $F$. Any function
$h:X\times X\times X\times X\rightarrow\mathbb{F}$ can be viewed
as a function 
\[
h\colon(X\times X)\times(X\times X)\rightarrow\mathbb{F},
\]
and so the $4$-tensor $h$ is the tensor product of two $|X|\times|X|$
matrices over $\mathbb{F}$. Consider the polynomial given by the
determinant. This will be a $G$-invariant polynomial, and furthermore
$\det(F)=1$, which implies that $F$ is stable, and hence $\slicerank(F)=|X|$. 
\end{example}
Our key observation is that the partition rank of a diagonal tensor
is maximal.
\begin{lem}
\label{lem:Critical_Lemma}Let $X$ be a finite set, and let $X^{n}$
denote the $n$-fold Cartesian product of $X$ with itself. Suppose
that 
\[
F\colon X^{n}\rightarrow\mathbb{F}
\]
is a diagonal tensor, that is 
\[
F(x_{1},\dots,x_{n})=\sum_{a\in A}c_{a}\delta_{a}(x_{1})\cdots\delta_{a}(x_{n})
\]
for some $A\subset X$ where $c_{a}\neq0$, and 
\[
\delta_{a}(x)=\begin{cases}
1 & x=a,\\
0 & \text{otherwise}.
\end{cases}
\]
Then 
\[
\prank(F)=|A|.
\]
\end{lem}
\begin{proof}
It's evident that the partition rank is at most $|A|$, and so our
goal is to prove the lower bound. The proof proceeds by induction
on the number of variables. When $n=2$, this is the usual notion
of rank, and so the result follows. Suppose that $F$ has partition
rank $r<|A|$, that is suppose that we can write 
\[
F(x_{1},\dots,x_{n})=\sum_{i=1}^{r}f_{i}(\vec{x}_{S_{i}})g_{i}(\vec{x}_{T_{i}})
\]
for some sets $S_{i},T_{i}$ with $S_{i}\cap T_{i}=\emptyset$ and
$S_{i}\cup T_{i}=\{1,\dots,n\}$. Assume without loss of generality
that $|S_{i}|\leq\frac{n}{2}$ for each $i$. If there is no $i$
such that $|S_{i}|=1$, then choose an arbitrary variable, say $x_{1}$,
and average over that coordinate. Then 
\[
\sum_{x_{1}\in X}F(x_{1},\dots,x_{n})=\sum_{a\in A}c_{a}\delta_{a}(x_{2})\cdots\delta_{a}(x_{n})=\sum_{i=1}^{r}\tilde{f}_{i}(\vec{x}_{S_{i}\backslash\{1\}})\tilde{g}_{i}\left(\vec{x}_{T_{i}\backslash\{1\}}\right),
\]
for functions $\tilde{f},\tilde{g}$ given by averaging $f,g$ over
$x_{1}$. This contradicts the inductive hypothesis since $\sum_{a\in A}c_{a}\delta_{a}(x_{2})\cdots\delta_{a}(x_{n})$
will have partition rank equal to $|A|>r$. 

Suppose that there exists some $S_{i}$ such that $|S_{i}|=1$. Then
$S_{i}=\{j\}$ for some $j\in\{1,\dots,n\}$. Let $U$ be the set
of indices $u$ for which $S_{u}=\{j\}$. Consider the annihilator
of $U$, defined to be 
\[
V=\left\{ h\colon X\rightarrow\mathbb{F}:\ \sum_{x_{j}\in X}f_{u}(x_{j})h(x_{j})=0\text{ for all }u\in U\right\} .
\]
This vector space has dimension at least $|X|-|U|$, and this will
be positive since $|U|\leq r<|A|\leq|X|$. Let $v\in V$ have maximal
support, and set $\Sigma=\left\{ x\in X:\ v(x)\neq0\right\} $. Then
$|\Sigma|\geq\dim V\geq|X|-|U|$, since otherwise there exists nonzero
$w\in V$ vanishing on $\Sigma$, and the function $v+w$ would have
a larger support than $v$. Multiplying both sides of our expression
by $v(x_{j})$ and summing over $x_{j}$ reduces the dimension by
$1$. Indeed
\[
\sum_{x_{j}\in X}v(x_{j})F(x_{1},\dots,x_{n})=\sum_{a\in A}c_{a}\delta_{a}(x_{1})\cdots\delta_{a}(x_{j-1})\delta_{a}(x_{j+1})\cdots\delta_{a}(x_{n})\left(\sum_{x_{j}\in X}v(x_{j})\delta_{a}(x_{j})\right),
\]
and since the sum $\sum_{x_{j}\in X}v(x_{j})\delta_{a}(x_{j})$ will
be non-zero for at least $|X|-|U|$ values of $a\in X$, the partition
rank of the above must be at least $|A|-|U|$ by the inductive hypothesis.
Since 
\[
\sum_{x_{j}\in X}v(x_{j})f_{i}(\vec{x}_{S_{i}})=0
\]
for each $i\in U$, it follows that 
\[
\sum_{x_{j}\in X}v(x_{j})\sum_{i=1}^{r}c_{i}f_{i}(\vec{x}_{S_{i}})g_{i}(\vec{x}_{T_{i}})
\]
will be a sum of at most $k-|U|$ partition rank $1$ functions, and
hence it has partition rank at most $k-|U|$. This implies that $|A|-|U|<k-|U|$,
which is a contradiction, and the lemma is proven.
\end{proof}
Since the partition rank is minimal over all $\mathcal{P}$-ranks,
the $\mathcal{P}$-rank for any set of non-trivial partitions $\mathcal{P}$
of a diagonal tensor will equal to the number of non-zero entries
on that diagonal tensor.

We conclude this section by rephrasing an open problem in non-commutative
circuits in terms of the $\mathcal{P}$-rank:
\begin{problem}
\label{prob:circuit_open}Let $X$ be a finite set of size $n$, and
let $k=4$. Let $\mathcal{P}$ be the set that contains the two partitions
$\{\{1,2\},\{3,4\}\}$, $\{\{1,3\},\{2,4\}\}$. Does $\delta(x_{1},x_{4})\delta(x_{2},x_{3})$
have superlinear $\mathcal{P}$-rank? That is, does $\delta(x_{1},x_{4})\delta(x_{2},x_{3})$
have $\mathcal{P}\dashrank\gg n^{1+\epsilon}$ for some $\epsilon>0$
as $n$ grows?
\end{problem}
A counting argument shows that there will exist many tensors of $\mathcal{P}$-rank
$\gg n^{2}$, however no explicit superlinear lower bounds are known
for any tensor. A positive answer to problem \ref{prob:circuit_open}
would lead to improved lower bounds for non-commutative circuits,
see \cite[Theorem 3.6]{ShpilkaYehudayoffCircuitsSurvey}. In general,
we can ask about the $\mathcal{P}$-rank of a product of $\delta$
functions that is given by a partition that is not a refinement of
any $P\in\mathcal{P}$.
\begin{problem}
\label{prob:refinement_lower} Let $X$ be a finite set. Let $\mathcal{P}$
be a collection of non-trivial partitions of $\{1,\dots,n\}$ and
suppose that $P$ is not a refinement of any $P'\in\mathcal{P}$.
Let 
\begin{equation}
\delta_{P}(x_{1},\dots,x_{n})=\prod_{A\in P}\delta\left(\vec{x}_{A}\right),\label{eq:delta_P_def}
\end{equation}
where for a singleton set $A=\{j\}$, we use the convention $\delta\left(\vec{x}_{A}\right)=\delta\left(x_{j}\right)=1.$
What is the $\mathcal{P}$-rank of $\delta_{P}$? 
\end{problem}

\section{The Distinctness Indicator Function\label{sec:The-Distinctness-Indicator}}

Let $X$ be a finite set, $\mathbb{F}$ a field, and let $X^{k}=X\times\cdots\times X$
denote the Cartesian product of $X$ with itself $k$ times. For every
$\sigma\in S_{k}$, define 
\[
f_{\sigma}\colon X\times\cdots\times X\rightarrow\mathbb{F}
\]
to be the function that is $1$ if $(x_{1},\dots,x_{k})$ is a fixed
point of $\sigma$, and $0$ otherwise. Using these functions $f_{\sigma}$
we will construct an indicator function for the distinctness of $k$
variables.
\begin{lem}
\label{lem:Group-action-lemma}We have the identity
\[
\sum_{\sigma\in S_{k}}\sgn(\sigma)f_{\sigma}(x_{1},\dots,x_{k})=\begin{cases}
1 & \text{if }x_{1},\dots,x_{k}\text{ are distinct},\\
0 & \text{otherwise},
\end{cases}
\]
where $\sgn(\sigma)$ is the sign of the permutation\@.
\end{lem}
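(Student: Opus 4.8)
The plan is to analyze the left-hand side $\sum_{\sigma\in S_k}\sgn(\sigma)f_\sigma(x_1,\dots,x_k)$ case by case according to how many coincidences there are among $x_1,\dots,x_k$. First I would observe that $f_\sigma(x_1,\dots,x_k)=1$ precisely when $\sigma$ permutes only indices $i,j$ for which $x_i=x_j$; equivalently, writing the partition of $\{1,\dots,k\}$ into the fibers of the map $i\mapsto x_i$ (so indices $i,j$ lie in the same block iff $x_i=x_j$), the condition $f_\sigma=1$ is exactly that $\sigma$ stabilizes each block setwise, i.e. $\sigma$ lies in the Young subgroup $S_{\lambda}=S_{\lambda_1}\times\cdots\times S_{\lambda_m}$ determined by the block sizes $\lambda=(\lambda_1,\dots,\lambda_m)$.

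With that reformulation, the left-hand side becomes $\sum_{\sigma\in S_\lambda}\sgn(\sigma)$. If all $x_i$ are distinct, then $\lambda=(1,1,\dots,1)$, the Young subgroup is trivial, and the sum is $\sgn(\mathrm{id})=1$, matching the first case. Otherwise at least one block has size $\geq 2$, so the sign character restricted to that $S_{\lambda_t}$ factor is nontrivial; the sum over $S_\lambda$ then factors as a product over blocks, $\prod_{t=1}^m\Big(\sum_{\tau\in S_{\lambda_t}}\sgn(\tau)\Big)$, and any factor with $\lambda_t\geq 2$ contributes $\sum_{\tau\in S_{\lambda_t}}\sgn(\tau)=0$ (equal numbers of even and odd permutations in a symmetric group of order $\geq 2$). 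Hence the whole product is $0$, matching the second case.

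The only mild subtlety — and the one point I would be careful to state cleanly rather than the "hard part," since there is no real obstacle here — is the bookkeeping that identifies $\{\sigma: f_\sigma(x)=1\}$ with the Young subgroup $S_\lambda$ and the factorization $\sgn(\sigma)=\prod_t\sgn(\sigma|_{\text{block }t})$ for $\sigma\in S_\lambda$; this is just the fact that the sign of a permutation lying in a direct product of symmetric groups is the product of the signs of its components. Once that is in place the vanishing is immediate from $\sum_{\tau\in S_j}\sgn(\tau)=0$ for $j\geq 2$, and the lemma follows.
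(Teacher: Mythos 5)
Your proof is correct and follows essentially the same approach as the paper's: identifying $\{\sigma : f_\sigma(\vec{x})=1\}$ with the stabilizer of $\vec{x}$ in $S_k$ (a Young subgroup), and observing that the sign sum over a Young subgroup vanishes unless the subgroup is trivial. You simply spell out the factorization step that the paper states more tersely.
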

\begin{proof}
By definition, 
\[
\sum_{\sigma\in S_{k}}\sgn(\sigma)f_{\sigma}(x_{1},\dots,x_{k})=\sum_{\sigma\in\text{Stab}(\vec{x})}\sgn(\sigma)
\]
where $\text{Stab}(\vec{x})\subset S_{k}$ is the stabilizer of $\vec{x}$.
Since the stabilizer is a product of symmetric groups, this will be
non-zero precisely when $\text{Stab}(\vec{x})$ is trivial, and hence
$x_{1},\dots,x_{k}$ must be distinct. This vector is then fixed only
by the identity element, and so the sum equals $1$.
\end{proof}
\begin{lem}
\label{lem:critical-indicator-function}Let $\text{Cyc}\subset S_{k}$,
be the $k$-cycles in $S_{k}$, and define 
\[
H_{k}(x_{1},\dots,x_{k})=\sum_{\begin{array}{c}
\sigma\in S_{k}\\
\sigma\notin\text{Cyc}
\end{array}}\sgn(\sigma)f_{\sigma}(x_{1},\dots,x_{k}).
\]
Then 
\[
H_{k}(x_{1},\dots,x_{k})=\begin{cases}
1 & \text{if }x_{1},\dots,x_{k}\text{ are distinct},\\
(-1)^{k-1}(k-1)! & \text{if }x_{1}=\cdots=x_{k},\\
0 & \text{otherwise}.
\end{cases}
\]
\end{lem}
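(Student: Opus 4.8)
The plan is to compute $H_k(x_1,\dots,x_k)$ directly from its definition by splitting the full signed sum over $S_k$ into the part coming from the $k$-cycle conjugacy class $C$ and the part coming from its complement, and then evaluating each piece according to the ``shape'' of the tuple $\vec x = (x_1,\dots,x_k)$. By Lemma \ref{lem:Group-action-lemma} we already know that $\sum_{\sigma\in S_k}\sgn(\sigma)f_\sigma(\vec x)$ equals $1$ when the coordinates are distinct and $0$ otherwise, so it suffices to understand the correction term $\sum_{\sigma\in C}\sgn(\sigma)f_\sigma(\vec x)$ and subtract it. Thus I would write
\[
H_k(\vec x) = \sum_{\sigma\in S_k}\sgn(\sigma)f_\sigma(\vec x) \;-\; \sum_{\sigma\in C}\sgn(\sigma)f_\sigma(\vec x),
\]
and treat the three cases in the statement separately.

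First, when $x_1,\dots,x_k$ are distinct: the only permutation fixing $\vec x$ is the identity, which is not a $k$-cycle (for $k\ge 2$), so the $C$-sum is $0$ and $H_k(\vec x)=1$, as claimed. Second, when the $x_i$ are neither all equal nor all distinct: then $\mathrm{Stab}(\vec x)$ is a nontrivial proper Young subgroup $S_{\lambda_1}\times\cdots\times S_{\lambda_m}$ of $S_k$ with at least two blocks, none of which can contain a $k$-cycle (a $k$-cycle has no fixed points, so it cannot lie in a proper Young subgroup); hence again the $C$-sum vanishes, and Lemma \ref{lem:Group-action-lemma} gives that the full sum is $0$, so $H_k(\vec x)=0$. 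Third, when $x_1=\cdots=x_k$: every $\sigma\in S_k$ fixes $\vec x$, so $f_\sigma(\vec x)=1$ for all $\sigma$; the full signed sum $\sum_{\sigma\in S_k}\sgn(\sigma)$ is $0$ (for $k\ge 2$), while $\sum_{\sigma\in C}\sgn(\sigma) = |C|\cdot\sgn(\text{a }k\text{-cycle})$. A $k$-cycle is a product of $k-1$ transpositions, so its sign is $(-1)^{k-1}$, and $|C| = (k-1)!$. Therefore
\[
H_k(\vec x) = 0 - (-1)^{k-1}(k-1)!\cdot\frac{}{}\,?
\]
— here one must be careful with the sign, since $H_k = (\text{full sum}) - (C\text{-sum}) = 0 - (-1)^{k-1}(k-1)!$, which would give $(-1)^{k}(k-1)!$ rather than $(-1)^{k-1}(k-1)!$; so in fact the cleaner bookkeeping is to note $H_k(\vec x) = \sum_{\sigma\notin C}\sgn(\sigma) = -\sum_{\sigma\in C}\sgn(\sigma) = -(-1)^{k-1}(k-1)! = (-1)^k(k-1)!$. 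I would double-check the intended normalization against the paper's later use; modulo that sign convention the computation is immediate.

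The only real content, and hence the ``main obstacle'' if there is one, is the combinatorial fact in the middle case: that a $k$-cycle never lies in the stabilizer of a tuple that is not constant, i.e. that $C$ meets no proper Young subgroup of $S_k$. This is clear since a $k$-cycle acts without fixed points on $\{1,\dots,k\}$ whereas any element of a proper Young subgroup $S_{\lambda_1}\times\cdots\times S_{\lambda_m}$ with $m\ge 2$ preserves a nontrivial partition into blocks and therefore cannot be a single $k$-cycle (it has at least two orbits). Everything else — the sign of a $k$-cycle being $(-1)^{k-1}$, the count $|C|=(k-1)!$, and the reduction via Lemma \ref{lem:Group-action-lemma} — is standard, so the proof is short. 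I would present it in essentially the order above: recall the decomposition $S_k = C \sqcup (S_k\setminus C)$, invoke Lemma \ref{lem:Group-action-lemma} for the full sum, and then dispatch the three cases by analysing $\mathrm{Stab}(\vec x)\cap C$.
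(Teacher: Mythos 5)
Your approach is exactly the paper's: decompose the signed sum over $S_k$ via Lemma \ref{lem:Group-action-lemma}, subtract the contribution of the $k$-cycle class $C$, and handle the three shapes of $\vec x$ separately by noting that a $k$-cycle fixes $\vec x$ only when $\vec x$ is constant. The paper compresses this into a single sentence, so your write-up is a fully detailed version of the same argument. Your suspicion about the sign is justified: the diagonal value in the lemma as printed is a typo, and your computation $H_k(x,\dots,x)=(-1)^{k}(k-1)!$ is the correct one. You can confirm this against the paper's own explicit expansions in Section \ref{sec:The-Distinctness-Indicator}: it states $H_2\equiv 1$, whereas $(-1)^{k-1}(k-1)!$ would give $-1$ at $k=2$; and evaluating the displayed $H_4$ on the diagonal gives $1-6+8+3=6=(-1)^4\cdot 3!$, not $(-1)^{3}\cdot 3! = -6$. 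The discrepancy is harmless for the paper's main theorems, since all that is used downstream is that the diagonal entry is nonzero in $\mathbb{F}_q$ (this is precisely where the hypothesis $p>k$ enters), but it does propagate: the claimed diagonal value $(-1)^{k}k!$ of $J_k$ in Section \ref{sec:Right-Corners} should likewise read $(-1)^{k+1}k!$.
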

\begin{proof}
This follows from lemma \ref{lem:Group-action-lemma} and the fact
that the conjugacy class of the $k$-cycle in $S_{k}$ is precisely
the set of all $k$-cycles, and this has $(k-1)!$ elements with sign
equal to $(-1)^{k-1}$.
\end{proof}
This function can be used to zero-out those tuples of vectors with
repetitions. Suppose that we are interested in the size of the largest
set $A\subset X$ that does not contain $k$ distinct vectors satisfying
some condition $\mathcal{K}$. Then if $F_{k}:X^{k}\rightarrow\mathbb{F}$
is some function satisfying
\[
F_{k}(x_{1},\dots,x_{k})=\begin{cases}
c_{1} & \text{if }x_{1},\dots,x_{k}\text{ satisfy }\mathcal{\mathcal{K}}\\
c_{2} & \text{if }x_{1}=\cdots=x_{k}\\
0 & \text{otherwise}
\end{cases}
\]
where $c_{2}\neq0$, then 
\[
I_{k}(x_{1},\dots,x_{k})\coloneqq F_{k}(x_{1},\dots,x_{k})H_{k}(x_{1},\dots,x_{k})
\]
when restricted to $A^{k}$ will be a diagonal tensor, and hence by
lemma \ref{lem:Critical_Lemma}
\[
|A|\leq\prank(I_{k}).
\]
Let 
\begin{equation}
\delta(x_{1},\dots,x_{k})=\begin{cases}
1 & \text{if }x_{1}=\cdots=x_{k}\\
0 & \text{otherwise}
\end{cases}\label{eq:delta_def}
\end{equation}
and for a partition $P$ of $\{1,\dots,k\}$ define $\delta_{P}(x_{1},\dots,x_{k})$
as in (\ref{eq:delta_P_def}). Suppose that $\sigma$ is a permutation
of $\{1,\dots,k\}$ with $t$ cycles in its disjoint cycle decomposition,
and these cycles permute the sets $S_{1},S_{2},\dots,S_{t}$. Then
$P=\{S_{1},\dots,S_{t}\}$ will be a partition of $\{1,\dots,k\}$
and we have that 
\begin{equation}
f_{\sigma}=\delta_{P}.\label{eq:f_sigma_delta_P}
\end{equation}
It follows that $H_{k}(x_{1},\dots,x_{k})$ can be written as a sum
product of delta functions. When $k=2$, we have that 
\[
H_{2}(x_{1},x_{2})=1,
\]
when $k=3$
\[
H_{2}(x_{1},x_{2},x_{3})=1-\delta(x_{1},x_{2})-\delta_{2}(x_{2},x_{3})-\delta(x_{3},x_{2})
\]
and when $k=4$
\begin{align*}
H_{3}(x_{1},x_{2},x_{3},x_{4})= & 1-\delta(x_{1},x_{2})-\delta_{2}(x_{2},x_{3})-\delta(x_{3},x_{4})-\delta(x_{4},x_{1})-\delta(x_{1},x_{3})-\delta(x_{2},x_{4})\\
 & +2\delta(x_{1},x_{2},x_{3})+2\delta(x_{2},x_{3},x_{4})+2\delta(x_{3},x_{4},x_{1})+2\delta(x_{4},x_{1},x_{2})\\
 & +\delta(x_{1},x_{2})\delta(x_{3},x_{4})+\delta(x_{1},x_{3})\delta(x_{2},x_{4})+\delta(x_{1},x_{4})\delta(x_{2},x_{3}).
\end{align*}
Note that as a function on $X^{3}$, $\slicerank(\delta(x_{1},x_{2}))=1$,
but on $X^{4}$ the function
\[
\delta(x_{1},x_{2})\delta(x_{3},x_{4})
\]
has slice rank equal to $|X|$. This function \emph{does} however
have partition rank equal to $1$. When writing $H_{k}(x_{1},\dots,x_{k})$
as a linear combination of $\delta$ functions, it will always have
partition rank at most $2^{k}-1$ since every term will have a $\delta$-function
corresponding to some subset strict $S\subset\{1,\dots,k\}$. Starting
from $k=4$, there will be functions in the sum whose slice rank is
maximal, and this is why the partition rank is needed to handle a
function of the form 
\[
I_{k}(x_{1},\dots,x_{k})\coloneqq F_{k}(x_{1},\dots,x_{k})H_{k}(x_{1},\dots,x_{k}).
\]

\section{Right Angles in $\mathbb{F}_{q}^{n}$\label{sec:Right-Angles}}

In this section we provide a proof of theorem \ref{thm:right-angle}.
We begin with a lemma concerning the dimension of the space of polynomials
of degree $\leq d$ in $n$ variables over $\mathbb{F}_{q}$. 

\subsection{The Space of Polynomials}

Let $\Poly_{D}\left(\mathbb{F}^{n}\right)$ denote the space of $n$-variable
polynomials over a field $\mathbb{F}$ of degree at most $D$. The
dimension of this space, $\dim\Poly_{D}\left(\mathbb{F}^{n}\right)$,
is equal to the number of $n$-variable monomials of degree $\leq D$
over $\mathbb{F}$. 
\begin{lem}
\label{lem:balls_urns}We have that 
\begin{equation}
\dim\Poly_{D}\left(\mathbb{F}^{n}\right)=\binom{D+n}{n}.\label{eq:balls-and-urns-counting-bound}
\end{equation}
\end{lem}
\begin{proof}
The balls and urns theorem states that there are 
\[
\binom{j+n-1}{n}
\]
 ways to put $j$ balls into $n$ urns. Viewing the balls as the exponents,
and the urns as the monomials $x_{1},\dots,x_{n}$, by summing over
$j$, we arrive at
\[
\sum_{j=0}^{D}\binom{j+n-1}{n}=\binom{D+n}{n}.
\]
\end{proof}
The specific case of a polynomial ring generated by $x_{1},\dots,x_{n}$
and $(x_{1}^{2}+\cdots+x_{n}^{2})$, is of particular interest, and
we define 
\begin{equation}
\Poly_{D}^{2}(\mathbb{F}^{n})=\spann_{\mathbb{F}}\left\{ (x_{1}^{2}+\cdots+x_{n}^{2})^{a_{0}}x_{1}^{a_{1}}\cdots x_{n}^{a_{n}}|\ \sum_{i=0}^{n}a_{0}\leq D\right\} .\label{eq:Poly_2_def}
\end{equation}
Equation (\ref{eq:balls-and-urns-counting-bound}) gives the upper
bound
\begin{equation}
\dim\Poly_{d}^{2}(\mathbb{F}^{n})\leq\binom{n+1+d}{d},\label{eq:Poly_2_counting_bound}
\end{equation}
but we will make use of the exact count given by Bannai and Bannai
\cite{BannaiBannai1981sDistanceSubset}:
\begin{lem}
\label{lem:BBB}(Bannai and Bannai \cite[equation 3]{BannaiBannai1981sDistanceSubset})
Let $\Poly_{d}^{2}(\mathbb{F}^{n})$ be as defined in (\ref{eq:Poly_2_def}).
Then
\[
\dim\Poly_{d}^{2}(\mathbb{F}^{n})=\binom{n+d}{d}+\binom{n+d-1}{d-1}.
\]
\end{lem}
\begin{rem}
When $\mathbb{F}=\mathbb{F}_{q}$ is a finite field, the space of
functions corresponding to the space of polynomials of degree at most
$D$ will be smaller since $x^{q}=x$. In our applications, we are
interested only in bounding the dimension of the space of functions,
rather than as a space of formal polynomials. By using a Chernoff
type bound, one can obtain improved counting bounds when $D$ is large,
say $D=\Omega(n)$. However, in the applications that follow, $D$
may not be very large relative to $n$, and so Lemma \ref{lem:balls_urns}
and Lemma \ref{lem:BBB} will be sufficient. 
\end{rem}
Using this lemma, and the slice rank, we prove theorem \ref{thm:right-angle}:
\begin{proof}[\emph{Proof of Theorem \ref{thm:right-angle}}]
\emph{ }Consider the function

\[
F:\mathbb{F}_{q}^{n}\times\mathbb{F}_{q}^{n}\times\mathbb{F}_{q}^{n}\rightarrow\mathbb{F}_{q}
\]
 defined by 
\[
F(x,y,z)=\left(1-\delta(x,y)-\delta(y,z)-\delta(z,x)\right)\left(1-\langle x-z,y-z\rangle^{q-1}\right).
\]
Then $F$ is an indicator for distinct right corners since 
\[
F(x,y,z)=\begin{cases}
-2 & \text{if }x=y=z,\\
1 & \text{if }x,y,z\text{ are distinct and form a right corner},\\
0 & \text{otherwise}.
\end{cases}
\]
If $A\subset\mathbb{F}_{q}^{n}$ has no right-corners, then $F|_{A\times A\times A}$
is a diagonal tensor, and so by Lemma \ref{lem:Critical_Lemma} 
\[
|A|\leq\slicerank(F).
\]
Note that
\[
\delta(x,z)\left(1-\langle x-z,y-z\rangle^{q-1}\right)=\delta(x,z),
\]
and
\[
\delta(y,z)\left(1-\langle x-z,y-z\rangle^{q-1}\right)=\delta(y,z),
\]
and that both of these functions have slice rank $1$. By definition
of the delta function, we have that 
\[
\delta(x,y)\left(1-\langle x-z,y-z\rangle^{q-1}\right)=\delta(x,y)\left(1-\langle x-z,x-z\rangle^{q-1}\right),
\]
and by expanding $\langle x-z,x-z\rangle^{q-1}$ as a degree $2(q-1)$
polynomial, the above can be written as a linear combination of terms
of the form 
\[
\delta(x,y)x_{0}^{d_{0}}x_{1}^{d_{1}}\cdots x_{n}^{d_{n}}z_{0}^{e_{0}}z_{1}^{e_{1}}\cdots z_{n}^{e_{n}}
\]
where $x_{0}=(x_{1}^{2}+\cdots+x_{n}^{2})$, $z_{0}=(z_{1}^{2}+\cdots+z_{n}^{2})$,
and 
\[
\sum_{i=0}^{n}e_{i}\leq q-1.
\]
Similarly, we may expand the degree $2(q-1)$ polynomial
\[
\left(1-\langle x-z,y-z\rangle^{q-1}\right)
\]
as a linear combination of terms of the form 
\[
x_{1}^{d_{1}}\cdots x_{n}^{d_{n}}z_{0}^{e_{0}}z_{1}^{e_{1}}\cdots z_{n}^{e_{n}}y_{1}^{f_{1}}\cdots y_{n}^{f_{n}},
\]
where once again $\sum_{i=1}^{n}e_{i}\leq q-1.$ In other words, both
of these terms can be expanded as a linear combination of terms of
the form 
\[
f(x,y)h(z)
\]
where $h\in\Poly_{q-1}^{2}(\mathbb{F}_{q}^{n})$. Hence by always
slicing off the $z$ coordinate, and decomposing $\delta(x,y)\langle x-z,x-z\rangle^{q-1}$
and $\langle x-z,y-z\rangle^{q-1}$ simultaneously, we obtain the
bound 
\[
\slicerank(F)\leq\dim\Poly_{q-1}^{2}(\mathbb{F}_{q}^{n})+2.
\]
By lemma \ref{lem:BBB} it follows that
\[
\slicerank(F)\leq\binom{n+q-1}{q-1}+\binom{n+q-2}{q-2}+2.
\]
Due to the binomial identity
\[
\binom{n+q-2}{q-3}+\binom{n+q-2}{q-2}+\binom{n+q-1}{q-1}=\binom{n+q}{q-1},
\]
the bound can be rearranged as 
\[
\slicerank(F)\leq\binom{n+q}{q-1}+2-\binom{n+q-2}{q-3}
\]
and the result follows.
\end{proof}

\section{$k$-Right Corners in $\mathbb{F}_{q}^{n}$ \label{sec:Right-Corners}}

In this section, we use the partition rank to prove theorem \ref{thm:Right-Corner}.
Our goal is to construct a function 
\[
J_{k}\colon\mathbb{F}_{q}^{k+1}\rightarrow\mathbb{F}_{q},
\]
of low partition rank satisfying 
\begin{equation}
J_{k}\left(x_{1},\dots,x_{k+1}\right)=\begin{cases}
c_{1} & x_{1},\dots,x_{k+1}\text{ form a \ensuremath{k} right corner},\\
c_{2} & x_{1}=\cdots=x_{k},\\
0 & \text{otherwise},
\end{cases}\label{eq:desired_properties}
\end{equation}
where $c_{2}\neq0$. Suppose that $E\subset\mathbb{F}_{q}$ is a set
without any $k$-right corners. Then $J_{k}$ restricted to $E^{k+1}$
will be a diagonal tensor taking the value $c_{2}$ on the diagonal,
and so by lemma \ref{lem:Critical_Lemma}, $J_{k}$ must have partition
rank at least $|E|$. It follows that any set $E\subset\mathbb{F}_{q}$
of size 
\[
E>\prank(J_{k}),
\]
must contain a $k$-right corner. Define 
\[
F_{k}\colon\mathbb{F}_{q}^{k}\rightarrow\mathbb{F}_{q}
\]
by 
\begin{equation}
F_{k}(x_{1},\dots,x_{k})=\prod_{j<l\leq k}\left(1-\langle x_{j},x_{l}\rangle^{q-1}\right).\label{eq:F_def}
\end{equation}
This polynomial has degree 
\begin{equation}
\deg F=k(k-1)(q-1)\label{eq:degF}
\end{equation}
and satisfies 
\[
F_{k}\left(x_{1},\dots,x_{k}\right)=\begin{cases}
1 & x_{1},\dots,x_{k}\text{ are mutually orthogonal},\\
0 & \text{otherwise}.
\end{cases}
\]
Note that $x_{1},\dots,x_{k}$ are not required to be unique for $F_{k}(x_{1},\dots,x_{k})$
to equal $1$. Define 
\begin{equation}
R_{k+1}(x_{1},\dots,x_{k+1})=F_{k}(x_{1}-x_{k+1},x_{2}-x_{k+1},\dots,x_{k}-x_{k+1})\label{eq:R_k_definition}
\end{equation}
and observe that 
\[
R_{k+1}(x_{1},\dots,x_{k})=\begin{cases}
1 & x_{1}-x_{k+1},\dots,x_{k}-x_{k+1}\text{ are mutually orthogonal},\\
0 & \text{otherwise}.
\end{cases}
\]
This function is not an indicator function for $k$-right corners
since there are trivial off-diagonal solutions that arise from repeated
variables. For example, suppose that $x_{1}-x_{k+1}$ and $x_{2}-x_{k+1}$
are orthogonal, and that $x_{2}-x_{k+1}$ is self orthogonal. Then
if we take $x_{i}=x_{2}$ for $2\leq i\leq k$, the $(k+1)$-tuple
$(x_{1},\dots,x_{k+1})$ will satisfy $R(x_{1},\dots,x_{k+1})=1$
despite not being a $k$-right corner. To handle this issue, we use
the distinctness indicator function $H_{k+1}$ from lemma \ref{lem:critical-indicator-function},
and define
\begin{equation}
J_{k}=H_{k+1}R_{k+1}\label{eq:J_k definition}
\end{equation}
The function $J_{k}$ will satisfy
\begin{equation}
J_{k}(x_{1},\dots,x_{k+1})=\begin{cases}
1 & \text{if }x_{1},\dots,x_{k+1}\text{ are distinct and form a }k\text{-right corner},\\
(-1)^{k}k! & \text{if }x_{1}=\cdots=x_{k+1},\\
0 & \text{otherwise}.
\end{cases}\label{eq:J_k_as_indicator_function}
\end{equation}
If $q=p^{r}$ with $p>k$, then $(-1)^{k}k!\neq0$, and $J_{k}$ satisfies
equation (\ref{eq:desired_properties}). The following lemma follows
from the work presented thus far:
\begin{lem}
\label{lem:thm_from_J_k_prank_bound}Let $q=p^{r}$ with $p>k$, and
let $J_{k}$ be defined according to (\ref{eq:J_k definition}). If
$A\subset\mathbb{F}_{q}^{n}$ contains no $k$-right corners, then
\begin{equation}
|A|\leq\prank(J_{k}).\label{eq:A_prank_J_k_bound}
\end{equation}
\end{lem}
To prove theorem \ref{thm:Right-Corner}, what remains is to bound
the partition rank of $J_{k}$. 

\subsection{Decomposition of $J_{k}$}

In this subsection we will decompose $J_{k}$ as a linear combination
of terms, and use this decomposition to prove the following partition
rank bound for $J_{k}$:
\begin{prop}
\label{prop:J_k_prank_bound}We have that 
\[
\prank(J_{k})\leq\binom{n+(k-1)q}{(k-1)(q-1)}.
\]
\end{prop}
Let $\mathcal{P}_{k+1}$ denote the set of non-trivial partitions
of $\{1,\dots,k+1\}$. By lemma \ref{lem:critical-indicator-function},
and (\ref{eq:f_sigma_delta_P}), $H_{k+1}$ can be written as a sum
product of delta functions 
\[
H_{k+1}=\sum_{P\in\mathcal{P}_{k+1}}c_{P}\prod_{A\in P}\delta\left(\vec{x}_{A}\right)
\]
where $c_{P}$ is a constant depending on the partition $P$. In the
construction of $H_{k+1}$, only non-trivial partitions were included,
and so we can split the product $J_{k}=H_{k+1}R_{k+1}$ into a linear
combination of terms of the form 
\begin{equation}
R_{k+1}(x_{1},\dots,x_{k+1})\prod_{A\in P}\delta\left(\vec{x}_{A}\right)\label{eq:RkA_term}
\end{equation}
where the product of delta functions term always contains two or more
delta functions. For such a term, the product of delta functions forces
many variables among $x_{1},\dots,x_{k+1}$ to be equal. The following
lemma uses the fact that equality among $x_{1},\dots,x_{k+1}$ introduces
redundant variables into the product definition of $R_{k+1}$ to express
$R_{k+1}$ as a lower degree polynomial.
\begin{lem}
\label{lem:R_k_simplification}Let $P=\{A_{1},\dots,A_{r}\}$ be a
non-trivial paritition of $\{1,\dots,k+1\}$. Let $a_{i}=\min(A_{i})$
denote the minimal element of $A_{i}$ for each $i\leq r-1$, and
suppose without loss of generality that $k+1\in A_{r}$, and let $a_{r}=k+1$.
Then 
\begin{equation}
R_{k+1}(x_{1},\dots,x_{k+1})\prod_{i=1}^{r}\delta\left(\vec{x}_{A_{i}}\right)=R_{r}(x_{a_{1}},\dots,x_{a_{r}})\prod_{i=1}^{r}\delta\left(\vec{x}_{A_{i}}\right)\Pi_{2}^{P}\label{eq:R_k_simplification}
\end{equation}
where 
\[
\Pi_{2}^{P}=\prod_{\substack{|A_{i}|>1\\
i\neq r
}
}\left(1-\langle x_{a_{i}}-x_{a_{r}},x_{a_{i}}-x_{a_{r}}\rangle^{q-1}\right)
\]
\end{lem}
\begin{proof}
For a set $A$ and any element $a\in A$, and a polynomial function
$Q$ of $|A|$ variables, the product $\delta(\vec{x}_{A})Q(\vec{x}_{A})$
will be exactly equal to $\delta(\vec{x}_{A})Q(x_{a},x_{a},\dots,x_{a})=\delta(\vec{x}_{A})\tilde{Q}(x_{a})$,
where $\tilde{Q}$ is a single variable polynomial obtained from $Q$
by making all the variables equal. For our purposes, $a_{i}$ need
only be some representative member of $A_{i}$, where the choice is
made in a way that is consistent across different partitions. For
simplicity, we choose $a_{i}=\min(A_{i})$ for $i\leq r-1$ and $a_{r}=k+1$. 

By the definition of $R_{k+1}$, (\ref{eq:RkA_term}) can be written
as
\[
\prod_{i=1}^{r}\delta\left(\vec{x}_{A_{i}}\right)\prod_{1\leq j<l\leq k}\left(1-\langle x_{j}-x_{k+1},x_{l}-x_{k+1}\rangle^{q-1}\right).
\]
Due to the delta functions, and the equality 
\[
\left(1-\langle x,y\rangle^{q-1}\right)^{2}=\left(1-\langle x,y\rangle^{q-1}\right),
\]
we can set many of the variables above to be equal without changing
the function. Consequently there will be many redundant terms in the
product, and it follows that 
\[
R_{k+1}(x_{1},\dots,x_{k+1})\prod_{i=1}^{r}\delta\left(\vec{x}_{A_{i}}\right)=R_{r}(x_{a_{1}},\dots,x_{a_{r}})\prod_{i=1}^{r}\delta\left(\vec{x}_{A_{i}}\right)\Pi_{2}^{P}.
\]
The additional product 
\[
\Pi_{2}^{P}=\prod_{\substack{i<r\ \text{s.t.}\\
|A_{i}|\geq2
}
}\left(1-\langle x_{a_{i}}-x_{a_{r}},x_{a_{i}}-x_{a_{r}}\rangle^{q-1}\right)
\]
arises due to the fact that forcing the equality $x_{i}=x_{j}$ does
not make $\langle x_{i}-x_{k+1},x_{j}-x_{k+1}\rangle$ equal to zero. 
\end{proof}
Let $\Poly_{d}(\mathbb{F}_{q}^{n})$ and $\Poly_{d}^{2}(\mathbb{F}_{q}^{n})$
be the polynomial spaces defined in section \ref{sec:Right-Angles}.
Applying equation (\ref{eq:R_k_simplification}), we have the following
lemma:
\begin{lem}
\label{lem:R_k_partition_degree_bound}Let $P=\{A_{1},\dots,A_{r}\}$
be a non-trivial paritition of $\{1,\dots,k+1\}$. Let $a_{i}=\min(A_{i})$
for each $i\leq r-1$, and suppose without loss of generality that
$k+1\in A_{r}$, and let $a_{r}=k+1$. Then we have that
\[
R_{k+1}(x_{1},\dots,x_{k+1})\prod_{i=1}^{r}\delta\left(\vec{x}_{A_{i}}\right)=\sum_{j}\left[\prod_{i=1}^{r}\delta\left(\vec{x}_{A_{i}}\right)Q_{i,j}\left(x_{a_{i}}\right)\right]
\]
where for each $1\leq i\leq r-1$, 
\[
Q_{i,j}\in\begin{cases}
\Poly_{d}(\mathbb{F}_{q}^{n})\ \text{with }d=(r-2)(q-1) & \text{ if }|A_{i}|=1,\\
\Poly_{d}^{2}(\mathbb{F}_{q}^{n})\ \text{with }d=(r-1)(q-1) & \text{ if }|A_{i}|>1.
\end{cases}
\]
\end{lem}
\begin{proof}
By definition,
\[
R_{r}(x_{a_{1}},\dots,x_{a_{r}})=\prod_{1\leq j<l\leq r-1}\left(1-\langle x_{a_{j}}-x_{a_{r}},x_{a_{l}}-x_{a_{r}}\rangle^{q-1}\right),
\]
and so by expanding the product in equation (\ref{eq:R_k_simplification})
of Lemma \ref{lem:R_k_simplification}, we may write 
\[
R_{r}(x_{a_{1}},\dots,x_{a_{r}})\prod_{i=1}^{r}\delta\left(\vec{x}_{A_{i}}\right)\Pi_{2}^{P}=\sum_{j}\left[\prod_{i=1}^{r}\delta\left(\vec{x}_{A_{i}}\right)Q_{i,j}\left(x_{a_{i}}\right)\right]
\]
for some polynomials $Q_{j,i}$. To prove the lemma, we need to prove
degree bounds on the $Q_{i,j}$. For any $1\leq i\leq r-1$, there
are exactly $r-2$ terms each of degree $q-1$ in the product definition
of $R_{r}$ that contain $x_{a_{i}}$. If $|A_{i}|=1$, then $x_{a_{i}}$
does not appear in $\Pi_{2}^{P}$, which implies that $\deg Q_{i,j}\leq(r-2)(q-1)$,
and hence 
\[
Q_{i,j}\in\Poly_{d}(\mathbb{F}_{q}^{n})\ \ \ \text{with }d=(r-2)(q-1).
\]
If $|A_{i}|>1$, then the product 
\[
\prod_{\substack{i<r\ \text{s.t.}\\
|A_{i}|\geq2
}
}\left(1-\langle x_{a_{i}}-x_{a_{r}},x_{a_{i}}-x_{a_{r}}\rangle^{q-1}\right)
\]
will contain an additional term that contains $x_{a_{i}}$. This term
is of degree $2(q-1)$ in $x_{a_{i},1},\dots,x_{a_{i},n}$, and so
\[
Q_{i,j}\in\Poly_{d}(\mathbb{F}_{q}^{n})\ \ \ \text{with }d=r(q-1).
\]
For $|A_{i}|>1$, if instead we expand the additional product in terms
of the variables $x_{a_{i},1},\dots,x_{a_{i},n}$ as well as $x_{a_{i},1}^{2}+\cdots+x_{a_{i},n}^{2}$,
then it will have degree $q-1$. When written in these $n+1$ variables,
it follows that $\deg Q_{i,j}\leq(r-1)(q-1)$, and hence
\[
Q_{i,j}\in\Poly_{d}^{2}(\mathbb{F}_{q}^{n})\ \ \ \text{with }d=(r-1)(q-1).\qedhere
\]
\end{proof}
Note that the previous Lemma did not specify a degree bound for $Q_{i,j}$
with $i=r$. This is because many terms contain $x_{a_{r}}$, and
the degree could be very large. Consequently, to bound the Partition
Rank of $J_{k}$, for any Partition $P$ we will never use the delta
function term associated with the set $A_{r}\in P$ that contains
$k+1$. , 

One difficulty in bounding the partition rank of $J_{k}=H_{k+1}R_{k+1}$
by decomposing $J_{k}$ into pieces is handling all of the different
delta functions that arise from all of the different partitions. A
naive approach would introduce a large factor (depending on $k$)
into our final bound. We need to make use of the fact that the delta
function associated with any particular set $B\subset\{1,\dots,k+1\}$
will appear in multiple different partitions of $\{1,\dots,k+1\}$.
In the following proposition we take a representative sample of subsets
that must appear in every partition, and use this to bound the partition
rank of $J_{k}$.
\begin{prop}
\label{prop:sequence_of_sets_bound}Let $\mathcal{P}_{k+1}$ denote
the set of non-trivial partitions of $\{1,\dots,k+1\},$and for each
$P\in\mathcal{P}_{k+1}$ let $r(P)$ denote the number of members
in $P$. Suppose that $B_{1},B_{2},\dots,B_{l}$ is a sequence of
subsets of $\{1,\dots,k\}$ such that for every $P\in\mathcal{P}_{k+1}$
there exists $A\in P$, where $k+1\notin A$, and $A=B_{i}$ for some
$i$. For each $i$, set 
\begin{equation}
r_{i}=\max\left\{ r(P):\ P\in\mathcal{P}_{k+1},\ B_{i}\in P,\text{ and }B_{j}\notin P\text{ for }j<i\right\} ,\label{eq:partition_max_criteria}
\end{equation}
and let 
\[
\mathcal{V}_{i}=\begin{cases}
\Poly_{(r_{i}-2)(q-1)}(\mathbb{F}_{q}^{n}) & \text{\text{if }}|B_{i}|=1,\\
\Poly_{(r_{i}-1)(q-1)}^{2}(\mathbb{F}_{q}^{n}) & \text{if }|B_{i}|>1.
\end{cases}
\]
Then we have that 

\begin{equation}
\prank(J_{k})\le\sum_{i=1}^{l}\dim\mathcal{V}_{i}.\label{eq:prank_bound_abstract_sets}
\end{equation}
\end{prop}
\begin{proof}
We will show that $J_{k}$ can be decomposed as a linear combination
of terms of the form 
\begin{equation}
\delta(\vec{x}_{B_{i}})Q(x_{b_{i}})T(\vec{x}_{\bar{B}_{i}})\label{eq:decomposition_into_pieces}
\end{equation}
 where $Q(x_{b_{i}})\in\mathcal{V}_{i}$ and $T(\vec{x}_{\bar{B}_{i}})$
is a function that involves only the variables with indices in $\bar{B}_{i}=\{1,\dots,k+1\}\backslash B_{i}$.
By the definition of the partition rank, this implies (\ref{eq:prank_bound_abstract_sets}).
Following equation (\ref{eq:RkA_term}), we can decompose $J_{k}$
as a linear combination of terms of the form
\[
R_{k+1}(x_{1},\dots,x_{k+1})\prod_{A\in P}\delta\left(\vec{x}_{A}\right).
\]
By definition, there exists some index $j$ such that $B_{j}\in P$.
Let $i$ be the minimal index such that $B_{i}\in P$. Then by definition
of $r_{i}$, $P$ must be a partition of $\{1,\dots,k+1\}$ into at
most $r_{i}$ parts. It follows from Lemma \ref{lem:R_k_partition_degree_bound}
that 
\[
R_{k+1}(x_{1},\dots,x_{k+1})\prod_{A\in P}\delta\left(\vec{x}_{A}\right)=\sum_{j}\delta\left(\vec{x}_{B_{i}}\right)Q_{j}(x_{b_{i}})T_{j}(\vec{x}_{\bar{B}_{i}})
\]
where for each $j$
\[
Q_{j}(x_{b_{i}})\in\mathcal{V}_{i}.
\]
Note that the product $\prod_{A\in P,A\neq B_{i}}\delta\left(\vec{x}_{A}\right)$
was not dropped, but rather is contained in the $T_{j}$ term. Thus
it follows that $J_{k}$ is a linear combination of terms of the form
in (\ref{eq:decomposition_into_pieces}), and we obtain equation (\ref{eq:prank_bound_abstract_sets}).
\end{proof}
We now prove Proposition \ref{prop:J_k_prank_bound}, which completes
the proof of Theorem \ref{thm:Right-Corner} by Lemma \ref{lem:thm_from_J_k_prank_bound}.
\begin{proof}[\emph{\emph{Proof of Proposition \ref{prop:J_k_prank_bound}}} ]
Let the sequence of subsets $B_{1},\dots,B_{2^{k}-1}$ be defined
by listing the non-empty subsets of $\{1,\dots,k\}$ in order by their
cardinality, with ties broken by lexicographical order of the elements
of the set. That is, 
\[
B_{1}=\{1\},B_{2}=\{2\},\dots,B_{k}=\{k\},B_{k+1}=\{1,2\},\dots,B_{2^{k}-1}=\{1,\dots,k\}.
\]
Let 
\[
d_{i}=\begin{cases}
(r_{i}-2)(q-1) & \text{if }|B_{i}|=1,\\
(r_{i}-1)(q-1) & \text{if }|B_{i}|>1,
\end{cases}
\]
where $r_{i}$ was defined in (\ref{eq:partition_max_criteria}).
By Proposition \ref{prop:sequence_of_sets_bound} we have the upper
\begin{equation}
\prank(J_{k})\leq\sum_{i:\ |B_{i}|=1}\dim\Poly_{d_{i}}\left(\mathbb{F}_{q}^{n}\right)+\sum_{i:\ |B_{i}|>1}\dim\Poly_{d_{i}}^{2}\left(\mathbb{F}_{q}^{n}\right),\label{eq:prank_bound_explicit}
\end{equation}
and so we will obtain an upper bound for $\prank(J_{k})$ by bounding
each of $r_{1},\dots r_{2^{k}-1}$. We begin by bounding the $r_{i}$
corresponding to the $\binom{k}{1}$ sets of size $1$. For $i=1$,
the maximal partition into subsets of size $1$ satisfies the criteria,
and so 
\[
r_{1}=k+1.
\]
For each $i\in\{2,\dots,k\}$, the maximal partition can have at most
$k+1-(i-1)$ singletons, since the singletons $\{1,\dots,i-1\}$ are
already covered by $B_{1},\dots,B_{i-1}$. These first $i-1$ elements
in $\{1,\dots,k+1\}$ can form at most $\lfloor\frac{i-1}{2}\rfloor$
pairs. For integers $i$ we have that $-(i-1)+\lfloor\frac{i-1}{2}\rfloor=-\lfloor\frac{i}{2}\rfloor$,
and so we have the upper bound 
\[
r_{i}\leq k+1=k+1-\biggr\lfloor\frac{i}{2}\biggr\rfloor.
\]
For $2\leq i\leq k$, $r_{i}$ this will be bounded above by $k-1$,
and so by Lemma \ref{lem:balls_urns} 
\begin{equation}
\sum_{i:\ |B_{i}|=1}\dim\Poly_{d_{i}}\left(\mathbb{F}_{q}^{n}\right)\leq\binom{n+(k-1)(q-1)}{(k-1)(q-1)}+(k-1)\binom{n+(k-2)(q-1)}{(k-2)(q-1)}.\label{eq:B_i_equal_1_bound}
\end{equation}
What remains is to bound $\sum_{i:\ |B_{i}|>1}\dim\Poly_{d_{i}}^{2}\left(\mathbb{F}_{q}^{n}\right)$,
and to combine these bounds in a simple manner. For the $\binom{k}{j}$
subsets of size $j>1$, note that any partition cannot contain any
sets of size less than $j$ that do not contain $k+1$, since those
partitions have already been accounted for. Thus for each of these
$\binom{k}{j}$ sets $B_{i}$ with $|B_{i}|=j$, we have
\[
r_{i}\leq1+\biggr\lfloor\frac{k}{j}\biggr\rfloor.
\]
By Lemma \ref{lem:balls_urns} 
\begin{equation}
\sum_{i:\ |B_{i}|>1}\dim\Poly_{d_{i}}^{2}\left(\mathbb{F}_{q}^{n}\right)\leq\sum_{j=2}^{k}\binom{k}{j}\binom{n+1+\lfloor\frac{k}{j}\rfloor(q-1)}{\lfloor\frac{k}{j}\rfloor(q-1)}.\label{eq:B_i_above_1_bound}
\end{equation}
To combine these two bounds, we make use of the Chu-Vandermonde identity,
which states that for any non-negative integers $a,b,c$
\[
\binom{a+b}{c}=\sum_{j=0}^{c}\binom{a}{j}\binom{b}{c-j}.
\]
Letting $a=k,$ $b=n+(k-1)(q-1)-1$, and $c=(k-1)(q-1)$, we have
that 
\begin{equation}
\binom{n+(k-1)q}{(k-1)(q-1)}=\sum_{j=0}^{k}\binom{k}{j}\binom{n+(k-1)(q-1)-1}{(k-1)(q-1)-j}\label{eq:chu_vandermonde}
\end{equation}
where the sum ends at $k$ since $(k-1)(q-1)\geq k$. The sum of the
$j=0$ and $j=1$ terms above can be written as 
\[
\binom{n+(k-1)(q-1)}{(k-1)(q-1)}+(k-1)\binom{n+(k-1)(q-1)-1}{(k-1)(q-1)-1},
\]
which is an upper bound for $\sum_{i:\ |B_{i}|=1}\dim\Poly_{d_{i}}\left(\mathbb{F}_{q}^{n}\right)$
by (\ref{eq:B_i_equal_1_bound}). For $k\geq3$ and $j\geq2$, we
have the bound 
\begin{equation}
\biggr\lfloor\frac{k}{j}\biggr\rfloor(q-1)\leq(k-1)(q-1)-j\label{eq:helper_simple_bound}
\end{equation}
For $2\leq j\leq k-1$, this follows from the fact that $\lfloor\frac{k}{j}\rfloor\leq k-j$,
and for $k=j$, it follows since $q>k$ by assumption. Thus for every
$2\le j\leq k$
\[
\binom{n+1+\lfloor\frac{k}{j}\rfloor(q-1)}{\lfloor\frac{k}{j}\rfloor(q-1)}\leq\binom{n+1+(k-1)(q-1)-j}{(k-1)(q-1)-j}.
\]
Consequently, by (\ref{eq:B_i_equal_1_bound}), (\ref{eq:B_i_above_1_bound}),
and the identity (\ref{eq:chu_vandermonde}), it follows that 
\[
\prank(J_{k})\leq\sum_{i}\dim\mathcal{V}_{i}\leq\binom{n+(k-1)q}{(k-1)(q-1)}.\qedhere
\]
\end{proof}

\specialsection*{Acknowledgments}

I would like to thank Will Sawin for his helpful comments and for
his simple proofs of lemmas \ref{lem:Group-action-lemma} and \ref{lem:critical-indicator-function}.
I would also like to thank Lisa Sauermann for her many helpful comments,
corrections, and suggestions. I am grateful to Avi Wigderson for referring
me to \cite{ShpilkaYehudayoffCircuitsSurvey}. This work was partially
supported by the NSERC PGS-D scholarship, and by Ben Green's ERC Starting
Grant 279438, Approximate Algebraic Structure and Applications.

\bibliographystyle{plain}

\end{document}